\documentclass[11pt]{article}

\usepackage[margin=1.08in]{geometry}
\usepackage{amsmath,amssymb,amsthm}
\usepackage{microtype}
\usepackage{needspace}
\usepackage{xcolor}
\usepackage[colorlinks=true,linkcolor=blue!55!black,citecolor=blue!55!black,
            urlcolor=blue!60!black]{hyperref}

\newtheorem{theorem}{Theorem}[section]
\newtheorem{corollary}[theorem]{Corollary}
\newtheorem{lemma}[theorem]{Lemma}
\theoremstyle{definition}

\theoremstyle{remark}
\newtheorem{remark}[theorem]{Remark}

\newcommand{\N}{\mathbb{Z}_{>0}}
\newcommand{\Pcal}{\mathcal{P}}
\newcommand{\Bcal}{\mathcal{B}}
\newcommand{\vth}{\vartheta}

\title{Refuting a Conjecture of Umans and Wang on Arithmetic-Progression Divisor Covers}
\author{
Xinjie He\thanks{University of California, Los Angeles.
  Email: \href{mailto:xinjieh@math.ucla.edu}{\texttt{xinjieh@math.ucla.edu}}.}
  \qquad
  Amit Sahai\thanks{University of California, Los Angeles.
  Email: \href{mailto:sahai@cs.ucla.edu}{\texttt{sahai@cs.ucla.edu}}.}
  }
\date{\today}

\begin{document}
\maketitle

\begin{abstract}
An \emph{$n$-divisor set} is a finite set of positive integers containing
a multiple of every integer from $1$ through $n$.  Umans and Wang proposed,
as the arithmetic-progression version of their Strong $(\alpha,\beta)$-Divisor
Conjecture, an $n$-divisor arithmetic progression having at most
$n^{2\beta}$ terms, each of magnitude at most $\exp(n^\alpha)$.  We prove
unconditionally that an $n$-divisor arithmetic progression of height $H$
with $\log H=o(\sqrt n)$ must have length
\[
 L\ge
 \left(\sqrt{\frac{8}{27}}-o(1)\right)
 \frac{n^{3/4}}{\sqrt{\log n}}.
\]
Consequently, the arithmetic-progression version is false whenever
$\alpha<1/2$ and $\beta<3/8$.  In particular, it is false at the proposed
point $(\alpha,\beta)=(1/3,1/3)$, even if both bounds are relaxed by
$n^{o(1)}$ at the exponent level.  The proof uses primes in a fixed band below
$\sqrt n$ to turn semiprime divisibility into a finite incidence structure.
An elementary bounded-degree linear-space estimate then gives the result.
This theorem concerns the one-dimensional arithmetic-progression version
only; it does not disprove the higher-rank Strong Divisor Conjecture.
\end{abstract}

\section{Introduction and statement of results}

For $n\in\N$, a finite set $A\subset\N$ has the \emph{$n$-divisor
property} if every $d\in\{1,\ldots,n\}$ divides at least one member of $A$.
Umans and Wang introduced conjectures asking for $n$-divisor sets with
additional difference-set structure and small height
\cite{UmansWang2025}.  Proposition~3.4 and Section~6 of their paper single
out a formally stronger sufficient statement: for infinitely many $n$,
there should be a positive arithmetic progression
\[
                A=\{b+ic:0\le i<L\},
                \qquad L\le n^{2\beta},
\]
whose members have magnitude at most $\exp(n^\alpha)$ and which has the
$n$-divisor property.  They call this the \emph{Arithmetic Progression
Version} of the Strong $(\alpha,\beta)$-Divisor Conjecture.  It is an
informally named version rather than a separately numbered conjecture.

Our main theorem gives a quantitative obstruction to every such
progression of sub-square-root logarithmic height.

\newpage
\begin{theorem}[Quantitative AP lower bound]\label{thm:quantitative}
Let $n$ tend to infinity through any unbounded sequence, and for each such
$n$ let
\[
       A_n=\{b_n+i c_n:0\le i<L_n\}\subset\N
\]
be an arithmetic progression with $b_n,c_n\in\mathbb Z$ and with the
$n$-divisor property.  Put
$H_n=\max A_n$.  If
\[
                         \log H_n=o(\sqrt n),
\]
then
\begin{equation}\label{eq:main-lower-bound}
       L_n\ge
       \left(\sqrt{\frac{8}{27}}-o(1)\right)
       \frac{n^{3/4}}{\sqrt{\log n}}.
\end{equation}
Equivalently, for every $\varepsilon>0$, inequality
\eqref{eq:main-lower-bound} holds with
$\sqrt{8/27}-\varepsilon$ in place of
$\sqrt{8/27}-o(1)$ for all sufficiently large $n$ in the sequence.
\end{theorem}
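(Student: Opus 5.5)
The plan is to build a finite partial linear space from the primes in a band just below $\sqrt n$ and the terms of $A_n$, and then apply a bounded-degree counting estimate to it. Fix a parameter $\theta\in(0,1)$, to be optimized at the end. Let $\Pcal$ be the set of primes $p$ with $\theta\sqrt n<p\le\sqrt n$, and remove those primes that divide $c_n$. Since $c_n\le H_n$ and each removed prime exceeds $\theta\sqrt n$, at most $\log H_n/\log(\theta\sqrt n)=o(\sqrt n/\log n)$ primes are removed. The hypothesis $\log H_n=o(\sqrt n)$ is used only here. By the prime number theorem the remaining set has size
\[
N=(1-o(1))\,\frac{2(1-\theta)\sqrt n}{\log n}.
\]
Points are the remaining primes. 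Lines are the indices $i\in\{0,\dots,L_n-1\}$, with $p$ incident to $i$ when $p\mid b_n+ic_n$. We may assume $L_n<\theta^2 n$, since otherwise there is nothing to prove.

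\emph{Step 1 (it is a linear space).} For distinct $p,q\in\Pcal$ we have $pq\le n$, so by the $n$-divisor property $pq$ divides some term. Hence every pair of points lies on at least one line. Suppose $p$ and $q$ both divide terms $i\ne j$. Then $pq\mid (j-i)c_n$, and since $p,q\nmid c_n$ this gives $pq\mid j-i$. But $0<|j-i|<L_n<\theta^2 n<pq$, a contradiction. So every pair of points lies on exactly one line.

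\emph{Step 2 (degree bound).} Since $p\nmid c_n$, the indices $i$ with $p\mid b_n+ic_n$ form a single residue class mod $p$. Hence the degree satisfies $r_p\le \lceil L_n/p\rceil\le L_n/(\theta\sqrt n)+1=:r$.

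\emph{Step 3 (the bounded-degree linear-space estimate).} First, every line $\ell$ has at most $r$ points, provided some point $q$ lies off $\ell$. Indeed, $q$ is joined to each point of $\ell$ by a line, these lines are distinct because two lines meet in at most one point, and so $r_q\ge|\ell|$. Second, if some line contained all $N$ points, then one index would be divisible by every prime in $\Pcal$, forcing $\log H_n\gtrsim N\log n\asymp\sqrt n$, which is excluded. So every line has at most $r$ points. Now fix a point $p$. The lines through $p$ partition the other $N-1$ points, so
\[
N-1=\sum_{\ell\ni p}(|\ell|-1)\le r_p(r-1)\le r^2 .
\]

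\emph{Step 4 (optimization).} Combining the steps gives $L_n/(\theta\sqrt n)+1\ge\sqrt{N-1}$, that is,
\[
L_n\ge(1-o(1))\,\theta\sqrt{2(1-\theta)}\,\frac{n^{3/4}}{\sqrt{\log n}}.
\]
Maximizing $\theta^2(1-\theta)$ at $\theta=2/3$ gives $2\cdot\frac{4}{27}=\frac{8}{27}$, which is the constant $\sqrt{8/27}$ in \eqref{eq:main-lower-bound}. Because $\theta$ is fixed, all error terms are $o(1)$ uniformly along the sequence, which gives the $\varepsilon$-formulation.

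I expect Step 3 to be the main obstacle, specifically the observation that line sizes are bounded by point degrees rather than by the crude bound $\log H/\log n$. The crude bound only yields $L_n\gg\sqrt n$, so the step that pins down the $n^{3/4}$ exponent and the exact constant is the dual argument that a point off a line sees that line through distinct lines. The rest is bookkeeping: removing primes that divide $c_n$, handling the $+1$ in $\lceil L_n/p\rceil$, and applying the prime number theorem in the band.
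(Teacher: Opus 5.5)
Your proposal is correct and is essentially the paper's proof: you use the same band of primes just below $\sqrt n$ with those dividing $c_n$ discarded, the same pair-covering and at-most-one-intersection structure on progression indices, the same point-off-a-block argument bounding block sizes by the degree bound $1+L_n/(\theta\sqrt n)$, and the same optimization at $\theta=2/3$. The only omission is the degenerate case $c_n=0$ or $L_n=1$ (where $c_n$ is unconstrained), in which your count of removed primes breaks down, since every prime divides $0$ and $c_n\le H_n$ need not hold. The paper handles this case by noting that a one-valued $n$-divisor progression forces $\operatorname{lcm}(1,\ldots,n)\mid H_n$, i.e.\ $\log H_n\ge(1+o(1))n$, contradicting the height hypothesis. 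For $c_n<0$ you should also bound $|c_n|$, or reverse the progression.
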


\begin{corollary}[Exponent exclusion]\label{cor:exponent-exclusion}
Fix $\alpha,\beta\ge0$ with
\[
                  \alpha<\frac12,
          \qquad  \beta<\frac38.
\]
There is no unbounded sequence of integers $n$ admitting positive
$n$-divisor arithmetic progressions satisfying
\[
       L_n\le n^{2\beta+o(1)},
       \qquad
       H_n\le \exp\!\bigl(n^{\alpha+o(1)}\bigr).
\]
In particular, for every sufficiently large $n$ there is no such
progression under the stronger literal bounds
$L_n\le n^{2\beta}$ and $H_n\le\exp(n^\alpha)$.
\end{corollary}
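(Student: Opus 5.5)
The plan is to deduce the corollary directly from Theorem~\ref{thm:quantitative} by comparing exponents. Suppose, for contradiction, that some unbounded sequence of integers $n$ admits positive $n$-divisor arithmetic progressions $A_n=\{b_n+ic_n:0\le i<L_n\}$ with
\[
 L_n\le n^{2\beta+o(1)}, \qquad H_n\le \exp\!\bigl(n^{\alpha+o(1)}\bigr).
\]
The first step is to check the height hypothesis of Theorem~\ref{thm:quantitative}. Taking logarithms gives $\log H_n\le n^{\alpha+o(1)}$. Since $\alpha<1/2$, the $o(1)$ term is eventually below $(1/2-\alpha)/2$. Hence $\log H_n\le n^{(\alpha+1/2)/2}=o(\sqrt n)$. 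The progressions have integer parameters and the $n$-divisor property, so the theorem applies along this sequence.

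The second step applies the conclusion with, say, $\varepsilon=\tfrac12\sqrt{8/27}$. For all sufficiently large $n$ in the sequence, this gives
\[
 L_n\ge \tfrac12\sqrt{8/27}\,\frac{n^{3/4}}{\sqrt{\log n}}=n^{3/4-o(1)}.
\]
Combining this with the upper bound yields $n^{3/4-o(1)}\le n^{2\beta+o(1)}$, that is, $3/4-2\beta\le o(1)$. This is impossible, because $2\beta<3/4$ is a fixed gap.

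For the final sentence of the corollary, argue again by contradiction. If infinitely many $n$ admitted a progression with $L_n\le n^{2\beta}$ and $H_n\le\exp(n^\alpha)$, those $n$ would form an unbounded sequence satisfying the weaker $n^{o(1)}$-relaxed bounds, with the $o(1)$ terms identically zero. The previous paragraph rules this out.

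There is no genuine obstacle; all the substance lies in Theorem~\ref{thm:quantitative}. The only points needing care are the following:
\begin{itemize}
\item the $o(1)$ terms must be handled uniformly along the sequence, so that $n^{\alpha+o(1)}$ is eventually dominated by $n^{\alpha'}$ for a fixed $\alpha'<1/2$;
\item the factor $1/\sqrt{\log n}$ must be absorbed into $n^{-o(1)}$.
\end{itemize}
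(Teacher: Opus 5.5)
Your proposal is correct and follows the paper's proof: you use $\alpha<1/2$ to get $\log H_n=o(\sqrt n)$, apply Theorem~\ref{thm:quantitative}, and contradict its lower bound using the fixed gap $2\beta<3/4$. Your explicit deduction of the literal-bounds sentence, by taking the $o(1)$ terms identically zero, is a harmless addition that the paper leaves implicit.
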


Taking $\alpha=\beta=1/3$ disproves the Arithmetic Progression Version at
the one-third point, including its exponent-level relaxation.  The proof is
unconditional.  Its only analytic input is the prime number theorem, used
in the standard equivalent forms
\[
       \pi(x)\sim\frac{x}{\log x},
       \qquad
       \vth(x):=\sum_{p\le x}\log p\sim x;
\]
see, for example, \cite[Chapter 6]{MontgomeryVaughan2007}.  The
finite-incidence estimate is proved below.  It is related in spirit to
classical finite-linear-space arguments such as those of de Bruijn and
Erd\H{o}s \cite{deBruijnErdos1948}, but no external incidence theorem is
required.

\section*{Discovery using Codex}

The proof was discovered in an OpenAI Codex run using the
\texttt{gpt-5.6-sol} model with reasoning effort set to \texttt{ultra};
see \cite{OpenAIGPT56}.  Codex also produced the initial write-up.
Subsequent human review verified
the proof, reviewed the citations, and revised the exposition.  All named
authors approve the final manuscript and take responsibility for its claims
and any remaining errors.

\section*{Acknowledgments}

The prompting strategy for the Codex discovery run borrowed several design
ideas from the UCLA Moonshot Harness project \cite{ZhangEtAl2026Moonshot}.

\section*{Reproducibility}

The mathematical argument is self-contained and does not rely on
computer-assisted calculations or on access to the Codex transcript.  The
model identifier and reasoning setting of the discovery run are recorded in
the preceding disclosure.

\section{A bounded-degree linear-space lemma}

By a \emph{block} we mean an indexed proper subset $B_j\subsetneq V$ in an
indexed family $\Bcal=(B_j)_{j\in J}$.  Blocks with distinct indices may be
identical as subsets.

\Needspace{15\baselineskip}
\begin{lemma}[Bounded-degree linear cover]\label{lem:linear-cover}
Let $V$ be a finite set of $v\ge2$ points, and let
$\Bcal=(B_j)_{j\in J}$ be an indexed family of blocks.
Suppose that
\begin{enumerate}
  \item for every two distinct points $p,q\in V$, there exists a block
    $B_j$ containing both $p$ and $q$;
  \item blocks with distinct indices intersect in at most one point; and
  \item every point lies in at most $\Delta$ blocks.
\end{enumerate}
Then
\begin{equation}\label{eq:linear-cover-bound}
                         v\le \Delta(\Delta-1)+1.
\end{equation}
\end{lemma}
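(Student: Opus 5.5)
Fix a point $p\in V$ and count the other points through the blocks containing $p$. The target is the classical projective-plane bound: a point lies in at most $\Delta$ blocks, and each such block should contain at most $\Delta$ points, so $p$ has at most $\Delta(\Delta-1)$ companions.

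\emph{Step 1 (the blocks through $p$ cover $V$).} By hypothesis (1), every $q\ne p$ lies in some block $B_j$ that also contains $p$. So
\[
V\setminus\{p\}\subseteq\bigcup_{j:\,p\in B_j}\bigl(B_j\setminus\{p\}\bigr).
\]
By hypothesis (3), at most $\Delta$ indices $j$ occur in this union. Hence
\[
v-1\le\sum_{j:\,p\in B_j}\bigl(|B_j|-1\bigr).
\]

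\emph{Step 2 (every block has at most $\Delta$ points).} This is the step that needs care, and it uses that blocks are proper subsets. Let $B=B_{j_0}$ and pick a point $x\in V\setminus B$. For each $y\in B$, hypothesis (1) gives a block $B_{j(y)}$ containing both $x$ and $y$.
\begin{itemize}
\item The index $j(y)$ is not $j_0$, since $x\notin B$.
\item If $j(y)=j(y')$ for distinct $y,y'\in B$, then the blocks with indices $j(y)$ and $j_0$ would share the two points $y,y'$. Since these indices are distinct, this contradicts hypothesis (2).
\end{itemize}
So $y\mapsto j(y)$ is injective into the set of indices of blocks containing $x$. By hypothesis (3) that set has size at most $\Delta$, so $|B|\le\Delta$. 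The indexed-family subtlety is harmless: repeated identical blocks with different indices that have two or more points would themselves violate (2), and the injectivity argument above works with indices throughout.

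\emph{Step 3 (combine).} Each term in the sum of Step 1 is at most $\Delta-1$, and there are at most $\Delta$ terms. Therefore $v-1\le\Delta(\Delta-1)$, which is \eqref{eq:linear-cover-bound}.

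Two degenerate cases should be checked.
\begin{itemize}
\item If $\Delta=0$, no block contains two points, so hypothesis (1) fails for $v\ge2$; the case cannot occur.
\item The assumption $v\ge2$ guarantees that the chosen point $p$ has some companion, so the counting in Step 1 is meaningful.
\end{itemize}

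The main obstacle is Step 2, the bound on block size. It is the only place where properness and the distinct-index intersection condition are used together.
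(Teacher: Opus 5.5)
Your proof is correct and is essentially the paper's argument: you bound each block's size by $\Delta$ using a point outside it (properness together with the at-most-one-intersection condition), then count the remaining points through the at most $\Delta$ blocks at a fixed point. The paper does these two steps in the opposite order, and you spell out the fact $j(y)\neq j_0$ (because $x\notin B$), which the paper uses without stating it.
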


\begin{proof}
First fix a block $B_j$.  Since it is proper, choose
$u\in V\setminus B_j$.  For every $q\in B_j$, the pair-covering assumption
supplies a block containing $u$ and $q$.  The blocks obtained for distinct
$q$ have distinct indices: otherwise one of them would meet $B_j$ in at
least two points.  All these blocks contain $u$, so $|B_j|\le\Delta$.

Now fix $u\in V$.  The at most $\Delta$ blocks containing $u$ together
contain every other point of $V$.  Each such block has at most $\Delta-1$
points other than $u$.  Therefore
\[
                         v-1\le\Delta(\Delta-1),
\]
which is \eqref{eq:linear-cover-bound}.
\end{proof}

\section{Primes just below the square-root scale}

We first prove a lower bound associated with any fixed prime band.  The
constant in Theorem~\ref{thm:quantitative} will follow by optimizing the
band.

\Needspace{10\baselineskip}
\begin{lemma}[A fixed-band lower bound]\label{lem:fixed-band}
Under the hypotheses of Theorem~\ref{thm:quantitative}, fix real constants
$0<a<b\le1$.  Then
\begin{equation}\label{eq:fixed-band-lower}
       L_n\ge
       \left(a\sqrt{2(b-a)}-o(1)\right)
       \frac{n^{3/4}}{\sqrt{\log n}}.
\end{equation}
\end{lemma}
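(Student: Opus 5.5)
The plan is to turn the $n$-divisor property into a finite incidence structure on the primes of the band $(a\sqrt n,b\sqrt n]$, and then apply Lemma~\ref{lem:linear-cover}. Points will be band primes, and blocks will be the sets of band primes dividing individual terms of $A_n$. Lemma~\ref{lem:linear-cover} then bounds the number of points by roughly $\Delta^2$, where $\Delta$ is the maximal number of terms divisible by a fixed band prime. That number is at most about $L_n/(a\sqrt n)$, and combining the two gives \eqref{eq:fixed-band-lower}.

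\emph{Setup.} Let $P$ be the set of primes in $(a\sqrt n,b\sqrt n]$. By the prime number theorem,
\[
|P|\sim \frac{2(b-a)\sqrt n}{\log n}.
\]
We may assume $L_n< a^2 n$, since otherwise \eqref{eq:fixed-band-lower} holds trivially for large $n$. Let $g=\gcd(b_n,c_n)$; we treat $c_n\neq0$, since for $c_n=0$ we have $L_n=1$ and the argument below with $V=P$ shows $|P|\le1$, a contradiction. A prime $p\in P$ with $p\mid c_n$ must satisfy $p\mid b_n$, for otherwise $p\le n$ divides no term. So such $p$ divides $g$, and hence divides the nonzero term $H_n$. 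Each such prime exceeds $a\sqrt n$, so there are at most
\[
\frac{\log H_n}{\log(a\sqrt n)}=o\!\left(\frac{\sqrt n}{\log n}\right)
\]
of them. Let $V$ be $P$ with these primes removed. Then $v=|V|=(2(b-a)+o(1))\sqrt n/\log n$; in particular $v\ge2$, and every $p\in V$ is coprime to $c_n$.

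\emph{Blocks and the three hypotheses.} For each index $0\le i<L_n$ define the block
\[
B_i=\{p\in V: p\mid b_n+ic_n\}.
\]
Each block is proper: if $B_i=V$, then $\log H_n\ge\sum_{p\in V}\log p\ge(b-a-o(1))\sqrt n$ by $\vth(x)\sim x$, contradicting $\log H_n=o(\sqrt n)$.

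For (1), take distinct $p,q\in V$. Then $pq\le b^2n\le n$, so some term is divisible by $pq$, and its block contains both $p$ and $q$.

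For (2), suppose distinct $p,q$ both divide the terms with indices $i\neq i'$. Then $pq\mid (i-i')c_n$. Since $\gcd(pq,c_n)=1$, this forces $pq\mid i-i'$, whence $L_n>|i-i'|\ge pq>a^2n$, which contradicts our assumption.

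For (3), since $p\nmid c_n$, the indices $i$ with $p\mid b_n+ic_n$ lie in a single residue class mod $p$. So each point lies in at most $\Delta:=\lceil L_n/(a\sqrt n)\rceil\le L_n/(a\sqrt n)+1$ blocks.

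\emph{Conclusion.} Lemma~\ref{lem:linear-cover} gives $v\le\Delta^2$, so
\[
L_n\ge a\sqrt n\,(\sqrt v-1)=\bigl(a\sqrt{2(b-a)}-o(1)\bigr)\frac{n^{3/4}}{\sqrt{\log n}}.
\]
The main obstacle is the linearity condition (2). It is what forces us to discard the band primes dividing $c_n$, and to dispose of the large-$L_n$ case separately. The height hypothesis is used exactly twice: to make the discarded primes negligible in number, and to guarantee that every block is proper.
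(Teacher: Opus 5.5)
Your proof is correct and follows essentially the paper's route: band primes as points, terms as blocks, discarding the negligibly many band primes dividing $c_n$, and Lemma~\ref{lem:linear-cover} with the degree bound $\Delta\le 1+L_n/(a\sqrt n)$. Two of your choices differ harmlessly from the paper: you show a band prime dividing $c_n$ must divide $b_n$ and hence $H_n$, where the paper normalizes to $1\le c\le H$, and your cruder bound $v\le\Delta^2$ costs nothing asymptotically; the one slip is claiming $c_n=0$ forces $L_n=1$, which is harmless since then $A_n=\{b_n\}$ gives $\operatorname{lcm}(1,\ldots,n)\mid H_n$, contradicting $\log H_n=o(\sqrt n)$.
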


\begin{proof}
Write $x=\sqrt n$, and suppress the subscript $n$.  A decreasing
nonconstant progression can be reversed, so it may be written
\[
       A=\{u+i c:0\le i<L\},
       \qquad u,c\in\N,
       \qquad u+(L-1)c=H.
\]
If the progression has only one distinct term, then the $n$-divisor property
implies
\[
                 \operatorname{lcm}(1,\ldots,n)\mid H.
\]
The prime number theorem gives
\(
 \log\operatorname{lcm}(1,\ldots,n)=(1+o(1))n,
\)
contradicting $\log H=o(\sqrt n)$.  We may therefore assume $L\ge2$ and
$c\ge1$.  In particular, $c\le H$.

Let
\[
       \Pcal_0=\{p\text{ prime}:a x\le p\le b x\},
       \qquad
       \Pcal=\{p\in\Pcal_0:p\nmid c\}.
\]
By the prime number theorem,
\begin{align}
       |\Pcal_0|
          &=\left(b-a+o(1)\right)\frac{x}{\log x},
          \label{eq:band-count-before}\\
       \sum_{p\in\Pcal_0}\log p
          &=\left(b-a+o(1)\right)x.
          \label{eq:band-mass-before}
\end{align}
The number of primes in $\Pcal_0$ dividing $c$ is at most
\[
       \frac{\log c}{\log(a x)}
          \le \frac{\log H}{\log(a x)}
          =o\!\left(\frac{x}{\log x}\right),
\]
and
\[
  \sum_{\substack{p\in\Pcal_0\\ p\mid c}}\log p
  =\log\!\left(\prod_{\substack{p\in\Pcal_0\\ p\mid c}}p\right)
  \le \log c\le\log H=o(x).
\]
Consequently, if $v=|\Pcal|$, then
\begin{equation}\label{eq:retained-band}
       v=\left(b-a+o(1)\right)\frac{x}{\log x},
       \qquad
       \sum_{p\in\Pcal}\log p
          =\left(b-a+o(1)\right)x.
\end{equation}

If $L>a^2n$, then \eqref{eq:fixed-band-lower} is immediate.  We henceforth
assume $L\le a^2n$.  For every progression index $i$, define
\[
                         B_i:=\{p\in\Pcal:p\mid u+ic\}.
\]
We check the hypotheses of Lemma~\ref{lem:linear-cover}.

First, every pair of distinct primes $p,q\in\Pcal$ lies in some block:
indeed,
\[
                         pq\le b^2n\le n,
\]
so the $n$-divisor property supplies a progression term divisible by $pq$.

Second, two differently indexed blocks meet in at most one point.  If
distinct $p,q$ belonged to both $B_i$ and $B_j$, with $i\ne j$, then
\[
                         pq\mid(i-j)c.
\]
Since $p,q\nmid c$, this implies $pq\mid i-j$.  But
\[
       0<|i-j|<L\le a^2n\le pq,
\]
a contradiction.

Third, every block is proper.  If some $B_i$ were all of $\Pcal$, the
product of the distinct primes in $\Pcal$ would divide $u+ic$.  By
\eqref{eq:retained-band}, this would imply
\[
       \log(u+ic)
          \ge\sum_{p\in\Pcal}\log p
          =\left(b-a+o(1)\right)x,
\]
contrary to $\log(u+ic)\le\log H=o(x)$.

Finally, let $\Delta$ be the actual maximum point degree.  For
$p\in\Pcal$, the congruence
\[
                         u+ic\equiv0\pmod p
\]
selects one residue class of indices modulo $p$, because $p\nmid c$.
Therefore
\begin{equation}\label{eq:degree-bound}
       \Delta\le1+\frac{L}{a x}.
\end{equation}

Put $t=L/(a x)$.  Lemma~\ref{lem:linear-cover} and
\eqref{eq:degree-bound} give
\[
       v\le\Delta(\Delta-1)+1\le t^2+t+1.
\]
Since $v\to\infty$, this forces
\[
       t\ge\frac{\sqrt{4v-3}-1}{2}
        =(1-o(1))\sqrt v.
\]
Using \eqref{eq:retained-band} and $\log x=\tfrac12\log n$, we obtain
\begin{align*}
       L
       &\ge (a-o(1))x
             \sqrt{\frac{(b-a)x}{\log x}}\\
       &=\left(a\sqrt{2(b-a)}-o(1)\right)
             \frac{n^{3/4}}{\sqrt{\log n}},
\end{align*}
which proves \eqref{eq:fixed-band-lower}.
\end{proof}

\begin{proof}[Proof of Theorem~\ref{thm:quantitative}]
Take $b=1$ and $a=2/3$ in Lemma~\ref{lem:fixed-band}.  The coefficient in
\eqref{eq:fixed-band-lower} is
\[
       a\sqrt{2(b-a)}
          =\sqrt{\frac{8}{27}}.
\]
The use of the endpoint $b=1$ is legitimate: for distinct primes
$p,q\le\sqrt n$, one still has $pq\le n$, exactly the range covered by the
$n$-divisor property.  The lemma now gives \eqref{eq:main-lower-bound}
directly.
\end{proof}

\section{Consequences and scope}

\begin{proof}[Proof of Corollary~\ref{cor:exponent-exclusion}]
Suppose, to the contrary, that qualifying progressions exist along an
unbounded sequence of $n$.  Since $\alpha<1/2$,
\[
       \log H_n\le n^{\alpha+o(1)}=o(\sqrt n),
\]
so Theorem~\ref{thm:quantitative} applies.  On the other hand,
$\beta<3/8$ gives
\[
       n^{2\beta+o(1)}
          =o\!\left(\frac{n^{3/4}}{\sqrt{\log n}}\right),
\]
contradicting the theorem's lower bound for $L_n$.
\end{proof}

At the one-third point this gives the promised unconditional conclusion:
there is no unbounded sequence of positive $n$-divisor arithmetic
progressions with
\[
       L_n\le n^{2/3+o(1)},
       \qquad
       H_n\le\exp\!\bigl(n^{1/3+o(1)}\bigr).
\]
Natural floor or ceiling choices in the number of terms are absorbed by the
$o(1)$.  Indexing the progression from $0$ or from $1$ merely changes its
initial term.

\begin{remark}[Positivity and zero]
The $n$-divisor property in Definition~3.1 of Umans and Wang is explicitly
defined for sets of positive integers.  Thus zero is not an admissible
progression term or divisor witness here.  Allowing zero as a witness would
make the condition vacuous, since every positive integer divides zero.  A
constant progression reduces to the singleton case treated in the proof.
\end{remark}

\begin{remark}[What has and has not been disproved]
Proposition~3.4 of Umans and Wang \cite{UmansWang2025} shows that the
Arithmetic Progression Version implies the Strong
$(\alpha,\beta)$-Divisor Conjecture; no converse is asserted.
The result above consequently does \emph{not} disprove the full Strong
$(1/3,1/3)$-Divisor Conjecture.  A difference of two higher-rank generalized
arithmetic progressions need not be a one-dimensional progression.  The
decisive step above uses
\[
       (u+ic)-(u+jc)=(i-j)c.
\]
For a rank-two progression, the corresponding difference contains two
independent coefficients, and the at-most-one-intersection argument does
not follow.
\end{remark}

\begin{remark}[Endpoints]
Theorem~\ref{thm:quantitative} does not cover the case
$\log H=O(\sqrt n)$, and Corollary~\ref{cor:exponent-exclusion} makes no
claim when $\beta=3/8$.  Indeed, the lower bound
$L\gg n^{3/4}/\sqrt{\log n}$ does not exclude $L\le n^{3/4}$.  The constant
$\sqrt{8/27}$ is the outcome of the fixed-band argument and is not claimed
to be optimal.
\end{remark}

\end{document}